\documentclass[12pt]{article}
\usepackage{multirow}
\usepackage[T1]{fontenc}
\usepackage[utf8]{inputenc}

\usepackage{graphicx}
\usepackage{color}
\usepackage{enumerate}
\usepackage{rotating}
\usepackage{tikz}
\usepackage{soul}
\usepackage{footnote}
\makesavenoteenv{tabular}
\makesavenoteenv{table}
\pagestyle{myheadings}
\textheight 9in \textwidth 6in  
\leftmargin 0.10in \rightmargin 0.10in
\topmargin -.30in \headsep .5in
\evensidemargin 0.10in \oddsidemargin 0.3in

\usepackage{amsmath, amsthm, amsfonts, amssymb}
\usepackage[normalem]{ulem}

\newtheorem{theorem}{Theorem}[section]

\newtheorem{corollary}[theorem]{Corollary}

\newtheorem{lemma}[theorem]{Lemma}
\newenvironment{proofcor}[2][Proof of Corollary]
{\par\noindent{\bf #1 #2.} }{\hspace*{\fill}\nolinebreak{$\Box$}\bigskip\par}

\begin{document}
\markboth{ }{}
\title{\bf Tight Bounds on the Complexity of Semi-Equitable Coloring of Cubic and Subcubic Graphs\footnote{Project has been 
partially supported by Narodowe Centrum Nauki under contract 
DEC-2011/02/A/ST6/00201}}
\date{}
\author{Hanna Furma\'nczyk\footnote{Institute of Informatics,\ University of Gda\'nsk,\ Wita Stwosza 57, \ 80-308 Gda\'nsk, \ Poland. \ e-mail: hanna@inf.ug.edu.pl},  \ Marek Kubale \footnote{Department of Algorithms and System Modelling,\ Gda\'nsk University of Technology,\ Narutowicza 11/12, \ 80-233 Gda\'nsk, \ Poland. \ e-mail: kubale@eti.pg.gda.pl}
}

\markboth{H. Furma\'nczyk, M. Kubale}{Tight Bounds on the Complexity of Semi-Equitable Coloring}

\maketitle

\begin{abstract}
A $k$-coloring of a graph $G=(V,E)$ is called semi-equitable if there exists a partition of its vertex set into independent subsets $V_1,\ldots,V_k$ in such a way that $|V_1| \notin \{\lceil |V|/k\rceil, \lfloor |V|/k \rfloor\}$ and $||V_i|-|V_j|| \leq 1$ for each $i,j=2,\ldots,k$. The color class $V_1$ is called non-equitable.
In this note we consider the complexity of semi-equitable $k$-coloring, $k\geq 4$, of the vertices of a cubic or subcubic graph 
$G$. In particular, we show that, given a $n$-vertex subcubic 
graph $G$ and constants $\epsilon > 0$, $k \geq 4$, it is NP-complete 
to obtain a semi-equitable $k$-coloring of $G$ whose non-equitable color class is of size $s$ 
if $s \geq n/3+\epsilon n$, and it is polynomially solvable if $s  \leq  n/3$.
\end{abstract}

\section{Introduction}
All graphs considered in this paper are finite, loopless, and without multiple edges. We refer the reader to \cite{west} for terminology in graph theory. 
We say that a graph $G = (V,E)$ is \emph{equitably $k$-colorable} if and only if its vertex set can be partitioned into independent sets $V_1,\ldots,V_k \subset V$ such that $|V_i| - |V_j| \in \{-1,0,1\}$ for all 
$i,j = 1,\ldots, k$. The smallest $k$ for which $G$ admits such a coloring is called the \emph{equitable chromatic number} of $G$ 
and denoted by $\chi_=(G)$. 
A graph $G$ on $n$ vertices has a \emph{semi-equitable coloring} if there exists a partition of its vertices into independent sets $V_1,\ldots,V_k \subset V$ such that one of these subsets, 
say $V_1$, is of 
size 
$s\notin \{\lfloor \frac{n}{k} \rfloor, \lceil \frac{n}{k} \rceil\}$, and the remaining subgraph $G - V_1$   is equitably $(k-1)$-colorable.
In what follows, such a color class $V_1$ will be called \emph{non-equitable}. 
These two models of graph coloring are motivated by applications in multiprocessor scheduling of unit-execution time jobs \cite{sch3,sch4}. 

In the following we will say that graph $G$ has a $(V_1, \ldots, V_k)$-coloring to express explicitly the partition of $V$ into $k$ independent 
sets. If, however, only cardinalities 
of color classes are important, we will use the notation of $[|V_1|, \ldots, |V_k|]$-coloring. For a given coloring, we call the difference
$\max\{|V_i|-|V_j|: i,j \in \{1,\ldots,k\}\}$ its \emph{color width}. Thus, a coloring of a graph is equitable if and only if the color width does not exceed 1. 

We mention the following two theorems on equitable graph coloring. First, Hajnal and  Szemeredi \cite{hfs:haj} proved

\begin{theorem}[\cite{hfs:haj}]
If $G$ is a graph with maximum degree $\Delta(G)$, $\Delta(G) \leq k$, then $G$ has an equitable $(k+1)$-coloring. \label{haj}
\end{theorem}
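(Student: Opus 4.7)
I would try to rebalance a given proper $(k+1)$-coloring by a sequence of local moves. Start from any proper $(k+1)$-coloring $(V_1,\ldots,V_{k+1})$, which exists because $\Delta(G)\le k$ forces $\chi(G)\le k+1$ via a greedy argument. If the coloring is already equitable, done; otherwise there is a \emph{big} class $B$ with $|B|\ge \lceil n/(k+1)\rceil+1$ and a \emph{small} class $S$ with $|S|\le \lfloor n/(k+1)\rfloor-1$, so $|B|\ge |S|+2$. The goal is to decrease $|B|-|S|$ by moving exactly one vertex from $B$ into $S$, possibly via a cascade of intermediate swaps that leave all other class sizes unchanged.

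\textbf{Cascade step.} Form an auxiliary digraph $D$ on the color classes with an arc $V_i\to V_j$ whenever some vertex $u\in V_i$ has no neighbor in $V_j$; such a $u$ is a \emph{witness}. Because each vertex has $\deg(u)\le k$ while there are $k$ other classes available, pigeonhole guarantees that every class has out-degree at least one in $D$. Suppose first that $D$ contains a directed path $B=V_{i_0}\to V_{i_1}\to\cdots\to V_{i_t}=S$ with witnesses $u_j\in V_{i_j}$, each $u_j$ having no neighbor in $V_{i_{j+1}}$. Perform the simultaneous moves $u_j\mapsto V_{i_{j+1}}$ for $j=0,\ldots,t-1$. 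Every intermediate class $V_{i_j}$ ($1\le j\le t-1$) loses $u_j$ and gains $u_{j-1}$, so its size is unchanged and it remains independent since $u_{j-1}$ has no neighbor there; meanwhile $|B|$ drops by $1$ and $|S|$ rises by $1$. Iterating, the imbalance strictly decreases until the coloring is equitable.

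\textbf{Main obstacle.} The hard case is when no directed $B$-to-$S$ path exists in $D$. Let $A$ be the set of classes reachable from $B$ in $D$; then $S\notin A$, and for every $V_i\in A$ and every $V_j\notin A$, every vertex of $V_i$ has at least one neighbor in $V_j$. This forces a very dense bipartite structure between $\bigcup_{V_i\in A}V_i$ and its complement, and one would hope to contradict $\Delta(G)\le k$ by a double count; however, a naive count is not quite enough, which is precisely what makes the original Hajnal--Szemer\'edi proof technical. To close this gap I would strengthen the strategy in one of two ways: either strengthen the inductive hypothesis (in the style of Kierstead--Kostochka) to maintain additional invariants of the coloring across cascades, or identify some $v\in B$ and $V_j\in A$ for which a local Kempe-chain recoloring inside $G[V_j\cup\{v\}]$ creates the missing arc into the complement of $A$ and lets the cascade continue. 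I expect essentially all of the real work of the proof to lie in executing this rerouting carefully, since the easy auxiliary-digraph argument handles only the well-behaved case.
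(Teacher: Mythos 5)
This statement is the Hajnal--Szemer\'edi theorem, which the paper only cites (from \cite{hfs:haj}) and does not prove, so there is no in-paper argument to compare against; your proposal must therefore stand on its own as a proof of the theorem, and it does not. You have correctly identified the standard strategy (the auxiliary digraph on color classes and the cascade of witness moves, essentially the Kierstead--Kostochka framework), but the entire difficulty of the theorem lives in the case you label ``Main obstacle'' and then leave open. When the small class $S$ is not reachable from the big class $B$, the set $A$ of reachable classes induces a dense bipartite structure, and you concede that ``a naive count is not quite enough'' and that you ``would strengthen the inductive hypothesis \ldots or identify some $v\in B$ \ldots'' without executing either option. That is not a gap that can be waved at: the published proofs resolve it with a delicate induction involving accessible/terminal classes, solo vertices, and a careful rerouting argument, and there is no short way around it. As written, your argument proves the theorem only for graphs and colorings in which the easy cascade always succeeds.

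There is also a concrete false step in the part you did write out: the pigeonhole claim that ``every class has out-degree at least one in $D$'' does not follow from $\deg(u)\le k$. A vertex of degree exactly $k$ can have exactly one neighbor in each of the $k$ other classes and then witnesses no arc at all; if every vertex of a class is of this type, that class has out-degree zero (consider $K_{k+1}$ with singleton classes). The correct statement one can extract is weaker --- any vertex of degree at most $k-1$, or any vertex whose neighbors collide in some class, misses at least one class --- and the real proof has to work with exactly this weaker information, which is another reason the reachability argument cannot be closed as casually as the proposal suggests.
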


This theorem implies that every cubic graph, i.e. a regular graph of degree 3, has an equitable $k$-coloring for every $k \geq 4$. Kierstead et al. \cite{fast} gave a simple algorithm for obtaining 
such a coloring in $O(n^2)$ time. Secondly, Chen et al. \cite{clcub} proved 

\begin{theorem}[\cite{clcub}]
If $G$ is a connected $3$-chromatic cubic graph, then there exists an equitable $3$-coloring of $G$. \label{chen}
\end{theorem}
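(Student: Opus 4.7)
My plan is to carry out a Kempe-exchange argument. Start with any proper $3$-coloring $(V_1,V_2,V_3)$ of $G$ (which exists since $\chi(G)=3$), ordered so that $|V_1|\ge|V_2|\ge|V_3|$, and among all such colorings pick one that minimizes the color width $w:=|V_1|-|V_3|$. The claim is that $w\le 1$, which is precisely the equitability statement.

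For the main step, suppose for contradiction that $w\ge 2$. Consider the bipartite subgraph $H_{13}=G[V_1\cup V_3]$, and for each connected component $C$ of $H_{13}$ define its \emph{surplus} $\sigma(C)=|C\cap V_1|-|C\cap V_3|$; summing over components gives $\sum_{C}\sigma(C)=w$. A short case analysis on the relative order of the triple $\bigl(|V_1|-\sigma,\,|V_2|,\,|V_3|+\sigma\bigr)$ shows that whenever some component satisfies $1\le\sigma(C)\le w-1$, swapping colors $1$ and $3$ throughout $C$ yields a proper $3$-coloring of strictly smaller width, contradicting our minimal choice. Hence every component $C$ of $H_{13}$ must satisfy $\sigma(C)\le 0$ or $\sigma(C)\ge w$. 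Since the surpluses sum to $w$, each heavy positive component already matches or exceeds the entire imbalance on its own and must be offset by deficit components, a very restrictive structural condition.

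The main obstacle is to rule out this rigid configuration. I would apply the same rigidity analysis to $H_{12}=G[V_1\cup V_2]$ whenever $|V_1|-|V_2|\ge 2$ and to $H_{23}=G[V_2\cup V_3]$ whenever $|V_2|-|V_3|\ge 2$; if both of these gaps are at most one, a cascaded Kempe exchange (first in $H_{12}$, then in $H_{23}$) pushes a single vertex from color $1$ through $V_2$ down to color $3$ and strictly decreases $w$. The hardest case is when all three bipartite subgraphs $H_{12},H_{13},H_{23}$ simultaneously exhibit the "heavy-plus-deficit" pattern. Here I would exploit cubicness (so each component of $H_{ij}$ has maximum degree at most $3$, and is therefore a path, cycle, or a subcubic tree/unicyclic graph) together with connectedness of $G$ and the fact that $G\neq K_4$ (automatic from $\chi(G)=3$, since $\chi(K_4)=4$) to derive a structural contradiction by analyzing how the heavy components sit inside $V(G)$ and bounding the edges of $G$ crossing between them. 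I expect this last structural step, rather than the Kempe case analysis itself, to be the delicate part of the argument.
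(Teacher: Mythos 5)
Your opening reduction is sound: starting from a width-minimal proper $3$-coloring and observing that a Kempe swap on any component of $G[V_1\cup V_3]$ with surplus strictly between $0$ and $w$ decreases the width is correct. But that is the easy part, and everything after it is a statement of intent rather than a proof. The entire difficulty of the theorem is concentrated in the ``rigid'' case you defer (every component of $H_{13}$ has surplus $\le 0$ or $\ge w$), and whole-component Kempe swaps are not sufficient there: your ``cascaded'' exchange through $H_{12}$ and $H_{23}$ again presupposes components of the right surplus in those subgraphs, so it does not escape the rigid configuration, and a Kempe swap never ``pushes a single vertex'' anywhere --- it recolors a whole component. The tool you are missing is the structural fact the paper's argument is built on (Lemma~\ref{lm}): for a connected bipartite subcubic graph $H(X,Y)$ with $|X|\ge |Y|$ one has $|X|-|Y|\le |Y3X|+1$, where $Y3X$ is the set of $Y$-vertices with all three neighbours in $X$. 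This converts a large surplus into a supply of vertices having \emph{no} neighbour in the third color class, which can therefore be recolored individually; the actual proof proceeds by such single-vertex and small-set recolorings ($C\Leftarrow x$ for $x\in A\deg B$, choosing $S\subset C\deg B$, and so on) through nine cases, not by component swaps alone.

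A second concrete symptom of the gap: any correct argument must visibly fail on $K_{3,3}$, which admits proper $3$-colorings only of type $[3,2,1]$ (width $2$) and is not equitably $3$-colorable; it is excluded from the theorem only because $\chi(K_{3,3})=2$. You note that $G\neq K_4$ follows from $\chi(G)=3$, but you never identify where $K_{3,3}$ is ruled out, whereas the paper's proof needs this exclusion at a specific point (its Case~8, where $N(x_1)=N(x_2)=N(z)$ would force $G=K_{3,3}$). Until your ``structural contradiction'' step is carried out in a way that breaks for $K_{3,3}$ and only for $K_{3,3}$, the argument is not a proof.
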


Actually, they proved \cite{clcub} that $\chi(G)=\chi_=(G)$ for any connected cubic graph $G$.
The proof starts from any proper 3-coloring of a connected cubic graph different from $K_4$ and $K_{3,3}$, and it relies on successive decreasing of the color width of this 
coloring by one or by two, step by step, until the coloring is equitable.
Moreover, Chen and Yen in \cite{yen} extended this result to disconnected subcubic graphs, where by a \emph{subcubic graph} we mean a graph $G=(V,E)$ with $\deg(v) \leq 3$ for all $v \in  V$.

\begin{theorem}[\cite{yen}]
A subcubic graph $G$ with $\chi(G)\leq 3$ is equitably 3-colorable if and only if exactly one of the following statements holds.
\begin{enumerate}
    \item No components or at least two components of $G$ are isomorphic to $K_{3,3}$.
    \item Only one component of $G$ is isomorphic to $K_{3,3}$ and $\alpha(G - K_{3,3}) > \frac{|V(G - K_{3,3})|}{3}>0$.
\end{enumerate}\label{yen}
\end{theorem}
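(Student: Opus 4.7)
My plan is to prove the two directions of the equivalence separately, with the rigid structure of $3$-colourings of $K_{3,3}$ as the central tool.

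For necessity, I first observe that any independent set of $K_{3,3}$ of size at least $2$ lies entirely in one side of the bipartition, so the only possible class-size multisets of a $3$-colouring of $K_{3,3}$ are $\{3,3,0\}$ and $\{3,2,1\}$. In particular $K_{3,3}$ is not equitably $3$-colourable on its own (the triple $(2,2,2)$ is forbidden), which rules out $G = K_{3,3}$ and forces $V(G-K_{3,3}) \neq \emptyset$ whenever exactly one component of $G$ is $K_{3,3}$. Suppose now that $G$ has a unique $K_{3,3}$-component $H$ and an equitable $3$-colouring with class sizes $a_1,a_2,a_3 \in \{\lfloor n/3 \rfloor, \lceil n/3 \rceil\}$; writing $s_i$ for the size of colour $i$ restricted to $G-H$ and $n'=|V(G-H)|$, a short case analysis on whether the restriction to $H$ has profile $\{3,3,0\}$ or $\{3,2,1\}$, combined with $|a_i - a_j| \le 1$, forces one of the $s_i$ to exceed $n'/3$ (a direct inequality chase yields $3 s_{\max} \ge n' + \text{const}$). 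Hence $\alpha(G-H) \ge \max_i s_i > n'/3$, as claimed.

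For sufficiency, I argue componentwise and then rebalance globally. If no component of $G$ is $K_{3,3}$, each $3$-chromatic connected cubic component is equitably $3$-colourable by Theorem~\ref{chen}, while the remaining components (bipartite or subcubic non-cubic) admit flexible $3$-colourings by ad hoc arguments or by adapting the Chen--Lih colour-width reduction technique; the permutation freedom of the three colours within each component then allows the global class sizes to be balanced. If at least two components are $K_{3,3}$, I pair them and $3$-colour each pair with classes of size $(4,4,4)$ via an explicit partition: writing the bipartition sides as $A_i, B_i$, take class~$1 = A_1 \cup \{b\}$ with $b \in B_2$, class~$2 = B_1 \cup (B_2 \setminus \{b\})$ (all $B$-side, hence independent across components), and class~$3 = A_2 \cup (\text{spare } B_1\text{-vertex})$ after a size-preserving swap. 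Under condition~2, I take a maximum independent set $I \subseteq V(G-H)$ with $|I| > n'/3$, use it as one colour class of $G-H$, $3$-colour the remainder, and align the result with a $\{3,3,0\}$-type colouring of $H$ placing the empty side opposite $I$; the arithmetic forced by $|I| > n'/3$ is precisely what the equitability constraint on $G$ needs.

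The hardest step will be the final rebalancing: the colour-size profiles coming from individual components must assemble into a triple in $\{\lfloor n/3 \rfloor, \lceil n/3 \rceil\}^3$. Every non-$K_{3,3}$ component provides the six-fold freedom of permuting its three colours, but $K_{3,3}$-components contribute only the rigid profiles $\{3,3,0\}$ or $\{3,2,1\}$. I expect the argument to hinge on combining these permutations with Kempe-chain swaps within $3$-chromatic components to achieve $\pm 1$ class-size adjustments, with the threshold $\alpha(G-H) > n'/3$ arising as both the necessary and the sufficient bound in condition~2 and thereby closing the equivalence.
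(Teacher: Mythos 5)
First, a point of reference: the paper does not prove this theorem at all --- it is quoted verbatim from Chen and Yen \cite{yen}. The only related argument the paper supplies is the Appendix proof of the weaker Corollary \ref{sub} (equitable $3$-colorability when neither $K_{3,3}$ nor $K_4$ occurs as a component), via the Chen--Lih color-width reduction. So your proposal can only be judged on its own merits. Your necessity direction is essentially sound: the observation that every proper $3$-coloring of $K_{3,3}$ has class-size profile $\{3,3,0\}$ or $\{3,2,1\}$, hence color width at least $2$, does force some class of $G-K_{3,3}$ to exceed $|V(G-K_{3,3})|/3$ in any equitable $3$-coloring of $G$, and rules out $G=K_{3,3}$ itself.

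The sufficiency direction, however, has a concrete error and a major deferral. The error is in your handling of condition~2: if you align $I$ with a $\{3,3,0\}$-profile coloring of $H=K_{3,3}$ (``placing the empty side opposite $I$''), then writing $n'=|V(G-H)|$ and $(s_1,s_2,s_3)$ for the class sizes on $G-H$, equitability of $G$ forces $s_3\ge s_1+2$ and $s_3\ge s_2+2$, hence $3s_3\ge n'+4$, i.e.\ you need an independent set of size at least $n'/3+2$ --- strictly more than the hypothesis $\alpha(G-H)>n'/3$ provides (e.g.\ $n'=9$ demands $|I|=5$ while condition~2 only guarantees $|I|\ge 4$). The correct alignment is with the $\{3,2,1\}$ profile, which needs class sizes roughly $(n'/3+1,\,n'/3,\,n'/3-1)$ on $G-H$; but then your recipe ``use $I$ as one colour class and $3$-colour the remainder'' fails, because the remainder $G-H-I$ would have to be \emph{$2$-colored}, and it need not be bipartite. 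Producing a proper $3$-coloring of $G-H$ with those prescribed, non-equitable class sizes from the bare hypothesis $\alpha(G-H)>n'/3$ is exactly the technical heart of Chen--Yen's argument and is missing here. The same deferral infects condition~1: ``flexible $3$-colourings by ad hoc arguments or by adapting the Chen--Lih colour-width reduction'' is precisely the content of the paper's entire Appendix, not a step one can wave at; and your pairing of $K_{3,3}$-components does not cover an odd number of them (three copies need the cyclic $(3,2,1)$, $(1,3,2)$, $(2,1,3)$ assignment, not pairing), while the final ``rebalancing'' of independently colored components silently uses the nontrivial union lemma (the paper's Theorem~1.5, also only cited). In short: the skeleton and the $K_{3,3}$ rigidity analysis are right, but the quantitative core of sufficiency is both miscalibrated and unproven.
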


By the above we immediately have the following corollary.
\begin{corollary}
If $G$ is a subcubic graph including neither $K_{3,3}$ nor $K_4$ as a component, then it admits an equitable $3$-coloring. \hfill $\Box$
\label{sub}
\end{corollary}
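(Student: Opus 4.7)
The plan is to derive the corollary directly from Theorem \ref{yen}, so there are really only two things to verify: that $\chi(G)\le 3$ (the hypothesis of Theorem \ref{yen}), and that $G$ satisfies one of the two listed conditions.

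First I would establish $\chi(G)\le 3$ via Brooks' theorem applied component by component. Each component $H$ of $G$ is a connected subcubic graph. If $\Delta(H)\le 2$, then $H$ is a path or a cycle and thus $3$-colorable (odd cycles need exactly $3$ colors but do not exceed $3$). If $\Delta(H)=3$, then by Brooks' theorem $H$ is $3$-colorable unless $H$ is a complete graph on $4$ vertices; since no component of $G$ is $K_4$ by hypothesis, this exception is ruled out. Therefore $\chi(G)\le 3$.

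Second, I would check that statement~(1) of Theorem~\ref{yen} is met. The hypothesis that no component of $G$ is isomorphic to $K_{3,3}$ is exactly the ``no components are isomorphic to $K_{3,3}$'' branch of condition (1). Consequently the second condition of Theorem~\ref{yen} does not need to be examined at all, and Theorem~\ref{yen} yields an equitable $3$-coloring of $G$.

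There is no real obstacle here: the corollary is essentially a packaging of Theorem~\ref{yen} under a slightly stronger but easier-to-use hypothesis. The only mild subtlety is that we must invoke Brooks' theorem (rather than Theorem~\ref{haj}, which would only give $\chi_=(G)\le 4$) in order to verify the chromatic hypothesis of Theorem~\ref{yen}; the exclusion of $K_4$ as a component is exactly what makes this invocation legal.
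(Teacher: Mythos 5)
Your proposal is correct: Brooks' theorem (applied per component, with the $K_4$ exclusion ruling out the only cubic exception) gives $\chi(G)\le 3$, and the absence of $K_{3,3}$ components puts $G$ squarely in branch (1) of Theorem \ref{yen}, so the equitable $3$-coloring follows. This is exactly the one-line justification the paper gives in the body (``By the above we immediately have the following corollary''), and your Brooks'-theorem step is the right way to discharge the chromatic hypothesis that the paper leaves implicit. However, the proof the paper actually writes out for this corollary (in the Appendix) takes a genuinely different, constructive route: it starts from an arbitrary proper $3$-coloring $(A,B,C)$ and runs a case analysis adapted from Chen, Lih and Wu's argument, repeatedly performing local recoloring exchanges (using Lemma \ref{lm} and its extension Lemma \ref{lmex}) to decrease the color width by one or two until the coloring is equitable. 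The point of that longer proof is algorithmic: each width-decreasing step runs in linear time and is applied $O(n)$ times, giving the explicit $O(n^2)$ equalization procedure that Section \ref{semi4} invokes to prove polynomial solvability of semi-equitable $k$-coloring with a small non-equitable class. Your derivation is shorter and logically sufficient for the corollary as stated, but it treats Theorem \ref{yen} as a black box and does not yield the concrete $O(n^2)$ procedure the rest of the paper depends on.
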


The problem of semi-equitable 3-coloring of connected cubic graphs was introduced in \cite{sch}. We have shown that every cubic graph with $t$ independent vertices has equitable
3-coloring for $t \in \{\lceil n/3 \rceil, \lfloor n/3 \rfloor \}$ and semi-equitable 3-coloring for $t \geq  2n/5$. In this note we extend those results to an arbitrary number $k \geq 4$ of colors 
and to, possibly disconnected, subcubic graphs. 
In contrast to equitable coloring not all cubic/subcubic graphs have a semi-equitable coloring (see $K_4$ for example). Therefore, in the 
following we assume that
all graphs under consideration have such a coloring. We will denote by $N(v)$ \emph{the $($open$)$ neighborhood} of the vertex $v\in V$, that is the 
set $\{u\in V: \{u,v\}\in E\}$. Let
$G_1 \cup G_2$ denote the \emph{union of graphs} $G_1=(V_1,E_1)$ and $G_2=(V_2,E_2)$ with disjoint vertex sets $V_1$ and $V_2$ and edge sets  $E_1$ and $E_2$, i.e. the graph $G$ with 
$V=V_1 \cup V_2$ and $E=E_1 \cup E_2$.
\begin{theorem}[\cite{yen}]
If two graphs $G_1$ and $G_2$ with disjoint vertex sets are both equitably $k$-colorable, then $G_1 \cup G_2$ is also equitably $k$-colorable. 
\end{theorem}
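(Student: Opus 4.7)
The plan is to prove this by an explicit pairing of the color classes of $G_1$ with those of $G_2$. Write $n_i = |V(G_i)| = k q_i + r_i$ with $0 \le r_i < k$ for $i = 1,2$. In the given equitable $k$-coloring of $G_i$, exactly $r_i$ classes have size $q_i+1$ (call them ``large'') and $k - r_i$ classes have size $q_i$ (call them ``small''). Since $V(G_1) \cap V(G_2) = \emptyset$ and no edge of $G_1 \cup G_2$ runs between $V(G_1)$ and $V(G_2)$, the union of any color class of $G_1$ with any color class of $G_2$ is an independent set in $G_1 \cup G_2$. The task thus reduces to producing a bijection $\pi$ between the $k$ classes of $G_1$ and the $k$ classes of $G_2$ such that the resulting $k$ combined sizes differ by at most $1$.

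I would split into two cases according to the total number of large classes. If $r_1 + r_2 \le k$, then $n = k(q_1+q_2) + (r_1+r_2)$, so the target is $r_1+r_2$ classes of size $q_1+q_2+1$ and $k - r_1 - r_2$ classes of size $q_1+q_2$. Achieve this by pairing every large class (of either $G_i$) with a small class of the other $G_j$, and then pairing the remaining $k - r_1 - r_2$ small classes of $G_1$ with the remaining small classes of $G_2$; the counts match because $G_1$ has $k - r_1 \ge r_2$ small classes and $G_2$ has $k - r_2 \ge r_1$ small classes. If $r_1 + r_2 > k$, set $r = r_1 + r_2 - k$, so $n = k(q_1+q_2+1) + r$ and the target is $r$ classes of size $q_1+q_2+2$ and $k-r$ classes of size $q_1+q_2+1$. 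Pair $r$ large classes of $G_1$ with $r$ large classes of $G_2$, then pair the remaining $r_1 - r = k - r_2$ large classes of $G_1$ with small classes of $G_2$ (there are exactly $k-r_2$ such) and the remaining $r_2 - r = k - r_1$ large classes of $G_2$ with the remaining $k - r_1$ small classes of $G_1$; no small-small pair is needed.

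A direct check shows that in both cases every combined class has one of the two desired sizes, so the resulting $k$-coloring of $G_1 \cup G_2$ is equitable. There is no real obstacle here beyond careful bookkeeping: the proof is essentially an integer-accounting argument ensuring that the supply of large and small classes on each side matches the demand dictated by $n \bmod k$. The only point requiring attention is to verify the non-negativity of all the counts used in the pairing, which follows directly from $0 \le r_1, r_2 < k$ and the case split on whether $r_1 + r_2$ is below or above $k$.
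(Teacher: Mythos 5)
Your proof is correct. Note that the paper itself gives no proof of this statement; it is quoted as a known result from Chen and Yen, so there is nothing internal to compare against. Your pairing argument is the natural elementary one: the key observations --- that the size multiset of an equitable $k$-coloring of an $n_i$-vertex graph is forced to be $r_i$ classes of size $q_i+1$ and $k-r_i$ of size $q_i$, that the union of a class from each side is independent because no edges cross between $V(G_1)$ and $V(G_2)$, and that the supply of large and small classes can always be matched to the demand dictated by $(n_1+n_2) \bmod k$ via the case split on $r_1+r_2 \lessgtr k$ --- are all present, and the counting in both cases checks out (in particular $r \le \min\{r_1,r_2\}$ and the residual counts $r_1-r = k-r_2$, $r_2-r = k-r_1$ exactly exhaust the small classes). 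This is a complete and self-contained proof of the cited theorem.
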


The rest of the paper is organized as follows. In Section \ref{np} we prove that, in contrast to equitable coloring, the problem of semi-equitable coloring 
becomes NP-complete for each $k \geq 3$. More precisely, we show that computing a semi-equitable $k$-coloring of a subcubic graph whose maximum color class is of size at 
least $(1/3 + \epsilon)n$ for any $\epsilon \in (0, 1/6)$ is NP-complete, if one exists. 
In Section \ref{semi4} we show how to obtain in $O(n^2)$ time a semi-equitable $k$-coloring of a subcubic graph with non-equitable color class of size at most $n/3$. Because we are interested in an algorithmic approach, in Appendix we reprove Corollary \ref{sub} by giving an appropriate algorithm resulting from a slight modification of Chen et al.'s proof \cite{clcub}. The computational complexity of the whole equalizing procedure is $O(n^2)$.


\section{NP-completeness of the problem} \label{np}

In this section we present one of the main results of the paper. We are interested in the computational complexity of deciding whether 
a subcubic graph $G$ has a semi-equitable $k$-coloring $(k \geq 4)$ with non-equitable color class of size $s$. In \cite{harder} we proved that the problem of deciding whether a cubic graph has a coloring of type $[4n/10,3n/10,3n/10]$ is NP-complete. 
In the following we strenghten and generalize this result to semi-equitable $k$-colorings $(k \geq 4)$ and subcubic graphs. We consider graphs not including $K_4$ as a component.
We say that $G$ is a $m$-\emph{divisible} graph if $|V(G)|$ is divisible by $m$. 

Let us define the following decision problem.

\vspace{0,5cm}

\emph{$(\frac{1}{3}+\epsilon)$-Semi-Equitable Coloring of a Cubic $m$-Divisible graph}
(SECCD)

Instance: A $m$-divisible cubic graph $G$, an integer $k \geq 4$, and $\epsilon >0$.

Question: Does $G$ have a semi-equitable $k$-coloring whose non-equitable color class is of size at least $(\frac{1}{3}+\epsilon)|V|$? 

\vspace{0.5cm}

We want to prove that SECCD is NP-complete. The following lemma states a strong relationship between our problem and the Stable Set Problem.

\begin{lemma}
Let $G$ be a cubic or subcubic graph, $k, s \in \mathbb{Z}^+$, and $k \geq 4$. Then $G$ has a semi-equitable $k$-coloring with non-equitable color class of size $s$ if and only if $G$ has an independent set of size $s$.\label{cors}
\end{lemma}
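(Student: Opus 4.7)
The \emph{only if} direction is immediate: the non-equitable class $V_1$ of any semi-equitable $k$-coloring $(V_1,\ldots,V_k)$ with $|V_1|=s$ is by definition an independent set of size $s$ in $G$.

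For the \emph{if} direction, suppose $I\subseteq V(G)$ is independent with $|I|=s$. The natural strategy is to take $V_1:=I$ as the non-equitable class and equitably $(k-1)$-color the subcubic subgraph $G-I$. Since $\Delta(G-I)\le 3$, whenever $k\ge 5$ we have $k-2\ge 3\ge \Delta(G-I)$, and Theorem \ref{haj} (Hajnal--Szemer\'edi) immediately produces an equitable $(k-1)$-coloring of $G-I$. Appending the class $I$ yields the required semi-equitable $k$-coloring.

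The delicate case is $k=4$, where one needs an equitable $3$-coloring of $G-I$. Since $K_4$ and $K_{3,3}$ are $3$-regular, any copy of either appearing as a subgraph of a subcubic graph must coincide with a whole component; hence, under the standing assumption of this section that $G$ has no $K_4$ component, $G-I$ has none either. If $G-I$ also has no $K_{3,3}$ component, Corollary \ref{sub} supplies the equitable $3$-coloring; Theorem \ref{yen} further settles the sub-cases with two or more $K_{3,3}$ components, or with exactly one $K_{3,3}$ component plus a remainder satisfying the Chen--Yen condition.

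The main obstacle is the residual pathological sub-case in which $G-I$ has exactly one $K_{3,3}$ component whose complement inside $G-I$ violates the Chen--Yen condition. To handle it I would modify the independent set: pick three vertices of $I$ (necessarily disjoint from this $K_{3,3}$, since being a component of $G-I$ forces it to be a component of $G$) and swap them for one partite class of the offending $K_{3,3}$. The resulting set $I'$ has the same cardinality $s$ and remains independent, because the chosen partite class has no edges leaving its component; and $G-I'$ has the $K_{3,3}$ component replaced by three isolated vertices. Verifying that such a swap is always available—i.e.\ that $I$ has at least three vertices outside the bad component (equivalently, $s\ge 3$, which can be arranged using the flexibility in choosing $I$) and that reinserting the three removed vertices does not forge a new $K_{3,3}$ component (this follows from the subcubic, hence low-density, structure of $G$)—is the one place where real checking is needed.
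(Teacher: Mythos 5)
Your overall strategy is the same as the paper's: the ``only if'' direction is immediate, the case $k\ge 5$ is dispatched by Theorem \ref{haj}, and for $k=4$ the only obstruction is a single $K_{3,3}$ component of $G-I$ violating the condition of Theorem \ref{yen}, to be removed by modifying $I$. Your observation that a $K_{3,3}$ (or $K_4$) component of $G-I$ must be a whole component of $G$, because these graphs are $3$-regular and $G$ is subcubic, is correct and is implicitly what makes any such exchange preserve independence. The genuine gap is exactly at the step you flag, and the justification you offer there is false. When you trade three vertices $a,b,c\in I$ for a partite class of the offending $K_{3,3}$, the vertices $a,b,c$ reappear in $G-I'$, and reinserting a vertex $v$ \emph{can} create a new $K_{3,3}$ component: this happens precisely when $v$ is the unique vertex of $I$ lying in some $K_{3,3}$ component of $G$, so that $G-I$ already contained the remaining $K_{2,3}$. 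Subcubicity does not exclude this; on the contrary, the paper's proof shows that this situation can force the global structure $G-V_1=sK_{2,3}\cup K_{3,3}\cup H$ and resolves it not by denying it but by a further case split: if $H\neq\emptyset$, exchange one vertex of $V_1$ with a vertex of $H$ so that $G-V_1$ acquires \emph{two} $K_{3,3}$ components, which is fine by condition 1 of Theorem \ref{yen}; if $H=\emptyset$, verify directly that $\alpha(G-V_1-K_{3,3})=3s>5s/3=|V(G-V_1-K_{3,3})|/3$, so condition 2 holds. In your version, if reinsertion creates two or more new $K_{3,3}$ components you are saved by condition 1, but if it creates exactly one you are back where you started, and your proof gives no way out.

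A secondary defect: your swap requires $s\ge 3$, and $s$ is part of the instance, so it cannot be ``arranged using the flexibility in choosing $I$.'' The paper's exchange moves a single vertex and therefore works for every admissible $s$; if you insist on swapping an entire partite class you must separately treat $s\in\{1,2\}$ (where, in fact, moving one vertex of the bad $K_{3,3}$ into $I$ already suffices). I recommend replacing the three-vertex swap by the paper's one-vertex exchange and supplying the structural analysis of the failure case described above.
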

\begin{proof}
If $G$ has a semi-equitable $k$-coloring, $k \geq 4$, with non-equitable color class of size $s$ then there must exist an independent set of size $s$ in $G$.

Conversaly, if there is an independent set of size $s$ in $G$, it forms a non-equitable color class, say $V_1$.
If $k\geq 5$, the existence of an equitable $(k-1)$-coloring of the remaining subcubic graph $G - V_1$ follows from Theorem \ref{haj}. Let $k=4$. If $G-V_1$ fulfills the condition from Theorem \ref{yen}, then we have an equitable 3-coloring of $G-V_1$. Let us assume 
that $G-V_1$ is not equitably 3-colorable. 
This means that there is only one component of $G-V_1$ isomorphic to $K_{3,3}$ and $\alpha(G-V_1-K_{3,3}) \leq \frac{|V(G)|-s-6}{3}$ due to Theorem \ref{yen}. Then we try to exchange one of vertices from $V_1$ to another one in the copy of $K_{3,3}$ in $G-V_1$. If we succeed, there would be no component isomorphic to $K_{3,3}$ in $G-V_1$. Otherwise we conclude that exactly $s$ vertices from $V_1$ belong to $s$ subgraphs isomorphic to $K_{3,3}$ in $G$ and the subgraph $G-V_1$ can be expressed as $sK_{2,3} \cup K_{3,3} \cup H$,  where $H$ is a subgraph (possobly empty) of $G-V_1$ that is free from $K_{3,3}$. If $V(H)$ includes at least one vertex, we exchange one vertex from $V_1$ with any vertex of $H$. After this exchange a new graph $G-V_1$ includes $2K_{3,3}$ as a subgraph and such a graph is equitably 3-colorable due to Theorem \ref{yen}. If $|V(H)|=0$, then we have $G-V_1 = sK_{2,3} \cup K_{3,3}$ and the independence number of $G-V_1-K_{3,3}$ is equal to $3s$ while $|V(G-V_1-K_{3,3}|=5s$. This means that the condition from Theorem \ref{yen} is fulfilled, as $3s > 5s/3$, and the graph $G-V_1$ is equitably 3-colorable.
\end{proof}

Now, we consider the following subproblem of the well known Maximum Independent Set (MIS) problem restricted to cubic graphs.

\vspace{0,5cm}

\emph{Maximum Independent Set in a Cubic $m$-Divisible graph} (MISCD)

Instance: A $m$-divisible cubic graph $G$, $l \in \mathbb{Z}^+$.

Question: Does $G$ have an independent set of size at least $l$? 

\begin{lemma}
Problem \emph{MISCD} is \emph{NP}-complete. \label{lm32}
\end{lemma}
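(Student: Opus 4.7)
The plan is to reduce from the Maximum Independent Set problem restricted to cubic graphs, which is well known to be NP-complete (see Garey and Johnson). Given an instance $(G,l)$ of cubic MIS with $n=|V(G)|$, I will build in polynomial time an instance $(G',l')$ of MISCD with $|V(G')|$ divisible by $m$ such that $G$ has an independent set of size at least $l$ if and only if $G'$ has an independent set of size at least $l'$. Membership in NP is immediate, since an independent set of size at least $l$ is a polynomial-size certificate verifiable in linear time.

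The reduction pads $G$ with disjoint copies of two cubic graphs of known independence number. Take $H_1 = K_{3,3}$ with $|V(H_1)|=6$ and $\alpha(H_1)=3$, and $H_2 = K_4$ with $|V(H_2)|=4$ and $\alpha(H_2)=1$. Since $G$ is cubic, the handshake identity forces $n$ to be even, and hence $-n$ is a multiple of $\gcd(6,4)=2$. By B\'ezout, there exist nonnegative integers $r_1,r_2=O(m)$ satisfying $6r_1+4r_2 \equiv -n \pmod{m}$. Set
\[
G' \;=\; G \,\cup\, r_1 K_{3,3} \,\cup\, r_2 K_4, \qquad l' \;=\; l + 3r_1 + r_2.
\]
Then $G'$ is cubic (disjoint unions of cubic graphs are cubic), it has $|V(G')|\equiv 0\pmod m$, and the construction is polynomial in the input size.

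The key equality is $\alpha(G') = \alpha(G) + 3r_1 + r_2$. This follows because an independent set in a disjoint union is the disjoint union of independent sets in the components, and conversely the union of optimal independent sets in each component is independent in $G'$; hence $\alpha(G)\ge l$ if and only if $\alpha(G')\ge l'$, which yields the desired equivalence.

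The only mildly delicate point is the existence of small nonnegative $r_1,r_2$ satisfying the congruence, which I will handle by the B\'ezout argument together with the parity of $n$; once this is in place the rest of the verification is mechanical, so I do not expect any serious obstacles.
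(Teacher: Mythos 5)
Your proposal is correct and follows the same basic strategy as the paper: pad the cubic MIS instance with disjoint cubic gadgets of known independence number so that the total order becomes divisible by $m$, and shift the threshold $l$ accordingly. The difference is only in the gadgets. The paper adds a single cubic \emph{bipartite} graph $Cub_{t,t}$ on $2t$ vertices (with $3 \le t \le m/2+2$ chosen so that $m \mid n+2t$), using the perfect matching of a cubic bipartite graph and K\"onig's theorem to get $\alpha(Cub_{t,t})=t$; you instead combine copies of $K_{3,3}$ and $K_4$ and argue via the numerical semigroup generated by $6$ and $4$ (all even integers except $2$) that the required residue $-n \bmod m$ is attainable, which is fine since $n$ is even. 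Both arguments work for the lemma as literally stated. One caveat: Section~2 of the paper carries the standing assumption that the graphs under consideration do not contain $K_4$ as a component, and this assumption is actually needed downstream (the proof of Lemma~\ref{cors} for $k=4$ relies on $G-V_1$ being $3$-chromatic via Theorem~\ref{yen}, which fails if a $K_4$ component survives). Your padded instances contain $K_4$ components, so while your reduction establishes NP-completeness of MISCD as defined, it would not feed cleanly into the rest of the paper's reduction chain. This is easily repaired: replace $K_4$ by an $8$-vertex cubic bipartite gadget such as the cube $Q_3$ (with $\alpha=4$), since every sufficiently large even integer lies in the semigroup generated by $6$ and $8$; or simply adopt the paper's single bipartite gadget $Cub_{t,t}$.
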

\begin{proof}
Note that the MIS problem is NP-complete \cite{garey} even if $G$ is 3-regular. We show that it remains so if $m | n$ (MISCD). Let us denote by $Cub_{t,t}$ any
cubic bipartite graph on $2t$ vertices such that $n+2t$ is divisible by $m$, $3 \leq t \leq m/2+2$. We remark that a bipartite cubic graph has a perfect matching, hence by K\"{o}nig's theorem \cite{west} the size of a maximum independent set in such a graph on $2t$ vertices is $t$. The statement holds because if we consider the graph $G \cup Cub_{t,t}$, the number of 
vertices in the new graph is divisible by $m$ and, moreover, $G$ has an independent set of size at 
least $l$ if and only if $G \cup Cub_{t,t}$ has an independent set of size at least $l+t$. 
\end{proof}

We will see that MISCD remains NP-complete on the subset of instances where 
$l=(\frac{1}{3}+\epsilon)|V|$, $\epsilon >0$. Let us formally define the new decision problem.
\vspace{0,5cm}

\emph{$(\frac{1}{3}+\epsilon)$-Independent Set in a Cubic 6-Divisible graph} (ISCD)

Instance: A 6-divisible cubic graph $G$, $\epsilon >0$.

Question: Does $G$ have an independent set of size at least $(\frac{1}{3}+\epsilon)|V|$? 

\begin{lemma}
Problem \emph{ISCD} is \emph{NP}-complete.
\label{npcom}
\end{lemma}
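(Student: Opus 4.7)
I would split the argument into membership in NP and NP-hardness. Membership is immediate: an independent set of size $\lceil(1/3+\epsilon)|V|\rceil$ is a polynomial-size certificate that is verifiable in linear time.

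For NP-hardness I would reduce from MISCD (Lemma~\ref{lm32}). Given an arbitrary MISCD instance $(G,l)$ with $|V(G)|=n$, I would construct the disjoint union $G' := G \cup Cub_{t,t}$, where $Cub_{t,t}$ is a cubic bipartite graph on $2t$ vertices. Such a graph exists for every $t \geq 3$: one takes $K_{3,3}$ for $t = 3$ and, for larger $t$, the bipartite Hamiltonian cycle $v_1 u_1 v_2 u_2 \cdots v_t u_t v_1$ augmented by the perfect matching $\{v_i u_{(i \bmod t)+1}\}_{i=1}^{t}$. Since $Cub_{t,t}$ is bipartite cubic, K\"onig's theorem provides a perfect matching, whence $\alpha(Cub_{t,t}) = t$ and therefore $\alpha(G') = \alpha(G) + t$.

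I would then choose the smallest positive integer $t$ satisfying the three conditions (i) $t \geq 3$; (ii) $n + 2t \equiv 0 \pmod{6}$; (iii) $t > n - 3l$. Because $n$ is even (every cubic graph has an even number of vertices), condition (ii) fixes the residue of $t$ modulo~$3$, and combined with the other constraints a suitable $t = O(n)$ always exists, so the reduction is polynomial-time. Setting $\epsilon := (l+t)/(n+2t) - 1/3$, condition (iii) guarantees $\epsilon > 0$, and the ISCD instance $(G',\epsilon)$ satisfies
$$ \alpha(G) \geq l \;\Longleftrightarrow\; \alpha(G') \geq l+t = \left(\tfrac{1}{3}+\epsilon\right)(n+2t) = \left(\tfrac{1}{3}+\epsilon\right)|V(G')|, $$
which is exactly the desired equivalence.

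The only real obstacle I foresee is the simultaneous handling of the three arithmetic constraints on $t$: one must verify that the parity of $n$ together with the modulo-$3$ condition forced by (ii) still leaves room to exceed the lower bound $n - 3l$ by an $O(1)$ amount. This is a short case analysis on $n \bmod 6$ rather than a conceptual difficulty, and it also confirms that the output $\epsilon$ is a rational of $O(\log n)$ bit length, so the reduction respects the input-size accounting even when $\epsilon$ is part of the instance encoding.
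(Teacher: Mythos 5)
Your proof is correct for the lemma as literally stated (with $\epsilon$ part of the ISCD instance), but it takes a genuinely different route from the paper. The paper also reduces from MISCD, but instead of padding with a single half-ratio gadget and then back-solving for $\epsilon$, it fixes the target ratio in advance as $1/3+1/p$ with $p=n_G\lceil 1/\epsilon\rceil$ and pads with gadgets whose independence ratios are exactly $1/3$ (the $6$-vertex prism, contributing $2$ independent vertices per $6$ vertices) and exactly $1/2$ ($K_{3,3}$), in two cases according to whether $l$ lies above or below $(1/3+1/p)n_G$, so that the padded graph hits the prescribed threshold exactly. Your construction is cleaner in that it needs only one gadget family and one case, and your verification that $t=O(n)$ exists (parity of $n$ plus a residue condition mod $3$) and that $\epsilon$ has polynomial bit length is sound; the equivalence $\alpha(G)\geq l \Leftrightarrow \alpha(G')\geq l+t=(\frac13+\epsilon)|V(G')|$ checks out since $\alpha(Cub_{t,t})=t$. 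What the paper's version buys, and yours does not, is control over the output parameter: because $1/p<\epsilon$ for any prescribed $\epsilon$, the paper's reduction shows that hardness persists on instances whose parameter is below any given threshold, which is what the subsequent Theorem \ref{twnpsemi} (``for any fixed $\epsilon\in(0,1/6)$'') implicitly leans on. In your reduction $\epsilon=(l+t)/(n+2t)-1/3$ is an uncontrolled byproduct of the instance (roughly $(3l-n+t)/(3(n+2t))$, anywhere in $(0,1/6)$ depending on $l$), so if the lemma is meant to support hardness for a fixed or arbitrarily small $\epsilon$, you would need to augment your padding with ratio-$1/3$ gadgets such as prisms to dilute the excess --- essentially rediscovering the paper's calibration step.
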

\begin{proof}
Let $G$, $l$, and $\epsilon$ be an instance of the MISCD problem with $m=6$. Let $n_G=|V(G)|$ and let $p = n_G \lceil 1/\epsilon \rceil$. Obviously, $1/p< \epsilon$. We will reduce the question about existence in $G$ an independent set of size at least $l$ to the question whether the subsequently defined cubic graph $H$ has an independent set of size at least $(1/3+1/p)n_H$, where $n_H = |V(H)|$. Let $q = |(1/3+1/p)n_G - l|$.

If $l \geq (1/3+1/p)n_G$, $H$ is the union of $G$ and $pq/6$ copies of the $P$ graph, where $P$ is a 6-vertex prism (2 triangles joined by a 3-matching). This results in increasing the order of the graph by $pq$, i.e. $n_H = n_G+pq$, which is divisible by $n_G$. Each of these copies of $P$ provides exactly 2 new vertices in any maximum independent set. Thus, if $G$ has an independent set on at least $l$ vertices than there exists in $H$ an independent set of size at least
$$l+pq/3 = (l-q)+(q+pq/3) = (1/3+1/p)n_G+(q+pq/3) =$$ $$=(1/3+1/p)n_G+pq(1/3+1/p) = (1/3+1/p)n_H,$$
and vice versa.

If $l < (1/3+1/p)n_G$, $H$ is the union of $q$ copies of $p/6-2$ prisms and two graphs $K_{3,3}$. This time each such subgraph $(p/6-2)P\cup 2K_{3,3}$ on $p$ vertices provides $(p/6-2)2+6 = p/3+2$ new independent vertices. Thus, if $G$ has an independent set of size at least $l$ then there exists in $H$ an independent set of size 
$$l+q(p/3+2) = (l+q)+q(p/3+1) = (1/3+1/p)n_G+ (q +pq/3) = (1/3+1/p)n_H,$$
and vice versa.            
\end{proof}

As a direct consequence of Lemmas \ref{cors} and \ref{npcom} we can conclude
\begin{theorem}
For any fixed $\epsilon \in (0,1/6)$ problem \emph{SECCD} is \emph{NP}-complete.     \label{twnpsemi} \hfill $\Box$
\end{theorem}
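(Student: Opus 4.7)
The plan is to derive Theorem \ref{twnpsemi} as an essentially immediate consequence of Lemma \ref{cors} and Lemma \ref{npcom}: all the real graph-theoretic content has been packed into those two lemmas, and what remains is a trivial reduction from ISCD to SECCD together with a short arithmetic check that the size bound on the non-equitable class is compatible with the definition of a semi-equitable coloring.

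First I would verify membership in NP: given a candidate partition $V_1,\ldots,V_k$ of $V(G)$, one checks in polynomial time that each $V_i$ is independent, that $||V_i|-|V_j||\leq 1$ for $i,j\in\{2,\ldots,k\}$, that $|V_1|\notin\{\lfloor n/k\rfloor,\lceil n/k\rceil\}$, and that $|V_1|\geq (1/3+\epsilon)n$.

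For NP-hardness I would reduce from ISCD (NP-complete by Lemma \ref{npcom}) via the identity reduction: given an ISCD instance $(G,\epsilon)$ with $G$ a 6-divisible cubic graph, output the SECCD instance $(G,k,\epsilon)$ for any fixed $k\geq 4$. Correctness uses Lemma \ref{cors} together with one small arithmetic observation. For any candidate size $s\geq (1/3+\epsilon)n$ and any $k\geq 4$, we have $s>n/3\geq n/k$, so for all sufficiently large $n$ (the small-$n$ case being handled by brute force) the integer $s$ lies strictly above $\lceil n/k\rceil$; hence $s\notin\{\lfloor n/k\rfloor,\lceil n/k\rceil\}$ and the size constraint is compatible with the semi-equitable definition. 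Lemma \ref{cors} then gives that $G$ has a semi-equitable $k$-coloring with non-equitable class of size exactly $s$ iff $G$ has an independent set of size $s$; quantifying over admissible $s$ in $[(1/3+\epsilon)n,\,n]$ yields the ``at least'' equivalence and hence the required correspondence between YES-instances of SECCD and of ISCD.

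The main obstacle is entirely upstream of this theorem: in Lemma \ref{cors} one must carefully handle the $k=4$ case when $G-V_1$ contains a $K_{3,3}$ component (where Theorem \ref{yen} can fail and is rescued by a vertex swap), and in Lemma \ref{npcom} one must engineer the prism and $K_{3,3}$ gadgets so that the independence count of $H$ shifts to exactly $(1/3+1/p)n_H$. Once both lemmas are in hand, Theorem \ref{twnpsemi} collapses to the bookkeeping check above, which is why the author phrases it as a direct consequence; the hypothesis $\epsilon<1/6$ is used only implicitly, to ensure the target size $(1/3+\epsilon)n<n/2$ is attainable in cubic graphs so that the problem is not trivially ``no.''
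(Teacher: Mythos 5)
Your proposal is correct and follows exactly the route the paper intends: the paper gives no written argument for Theorem \ref{twnpsemi} beyond declaring it ``a direct consequence of Lemmas \ref{cors} and \ref{npcom},'' and your reduction from ISCD via the identity map, together with the NP-membership check and the arithmetic verification that $s\geq(1/3+\epsilon)n$ forces $s\notin\{\lfloor n/k\rfloor,\lceil n/k\rceil\}$ for $k\geq 4$, is precisely the bookkeeping the authors leave to the reader. Your closing remark about the role of $\epsilon<1/6$ is also accurate, since a cubic graph has independence number at most $n/2$, with equality only in the (polynomially detectable) bipartite case.
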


\section{Semi-equitable $k$-coloring of subcubic graphs}\label{semi4}
In Section \ref{np} we have proved that for any constants $k \geq 4$, $\epsilon \in (0,1/6)$ and any subcubic graph $G$, the problem of deciding if $G$ has a $(1/3+\epsilon)$-semi-equitable $k$-coloring is 
NP-complete. It turns out that if we diminish the size of non-equitable color class slightly, namely to $\lceil n/3 \rceil$, then the problem becomes polynomially solvable for any
$k \geq 4$.

\begin{theorem}
Given a $n$-vertex cubic or subcubic graph $G$ not including $K_4$ neither $K_{3,3}$, a constant $k \geq 4$, and an integer $s \leq \lceil n/3 \rceil$, finding a semi-equitable $k$-coloring of $G$ with non-equitable color class of size $s$
is solvable in $O(n^2)$ time.
\end{theorem}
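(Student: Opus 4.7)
The plan is to build the coloring in two stages: first extract a size-$s$ independent set $V_1$ to serve as the non-equitable color class, then equitably $(k-1)$-color $G' := G - V_1$.

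For the first stage, since $G$ contains neither $K_4$ nor $K_{3,3}$ as a component, Corollary \ref{sub} guarantees an equitable 3-coloring $(W_1, W_2, W_3)$, and the algorithmic version worked out in the Appendix produces one in $O(n^2)$ time. Since $\max_i |W_i| \geq \lceil n/3 \rceil \geq s$, any $s$ vertices taken from a largest class form the desired independent set $V_1$. Note that no new $K_4$ component can appear in $G'$: a $K_4$ subgraph has every vertex of degree $3$ inside itself, so in a subcubic $G$ it must already be a component of $G$, contradicting the hypothesis.

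For the second stage, $G'$ is subcubic, so $\Delta(G') \leq 3$. When $k \geq 5$ we have $k-1 \geq 4 > \Delta(G')$, and Theorem \ref{haj} applies; the algorithm of Kierstead et al.\ from \cite{fast} outputs an equitable $(k-1)$-coloring of $G'$ in $O(n^2)$ time. When $k = 4$ we need an equitable 3-coloring of $G'$, which Theorem \ref{yen} provides unless $G'$ has exactly one $K_{3,3}$ component, call it $B$, with $\alpha(G'-B) \leq |V(G'-B)|/3$.

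The main obstacle is handling that exceptional case for $k=4$. I follow the swap argument of Lemma \ref{cors}: pick any $u \in V(B)$; since $u$ already has three neighbors inside $B$ and $G$ is subcubic, $u$ has no neighbor in $V_1$, so $V_1' = (V_1 \setminus \{v\}) \cup \{u\}$ is independent for every choice of $v \in V_1$. The swap removes $B$ from the decomposition of $G - V_1'$, and if no new $K_{3,3}$ component is created we are done by Theorem \ref{yen}. The only way the swap can fail is the rigid structure identified in Lemma \ref{cors}, where $G - V_1 = sK_{2,3} \cup B \cup H$; then either $H \neq \emptyset$ and swapping with a vertex of $H$ yields a graph with two $K_{3,3}$ components, or $H = \emptyset$ and one verifies directly that $\alpha(G'-B) = 3s > 5s/3$, so the Chen--Yen condition is again satisfied. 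Each structural check and swap is local and costs $O(n)$, and afterwards the equalizing routine of the Appendix colors $G - V_1'$ in $O(n^2)$ time. The total running time is therefore $O(n^2)$, dominated by the equitable 3-coloring of the Appendix and, for $k \geq 5$, by the Hajnal--Szemer\'edi algorithm of \cite{fast}.
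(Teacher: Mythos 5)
Your proof is correct and follows the same two-stage architecture as the paper: extract an independent set of size $s$, then equitably $(k-1)$-color the remainder, using Theorem \ref{haj} for $k\geq 5$ and an equitable $3$-coloring for $k=4$. Two differences are worth noting. For the first stage the paper uses greedy minimum-degree deletion (each step after the first removes at most $3$ vertices, so roughly $n/3$ independent vertices are found, with the Appendix procedure invoked only as a fallback when one more vertex is needed); your route of running the Appendix's equitable $3$-coloring up front and truncating a largest class to $s$ vertices is cleaner and stays within the $O(n^2)$ budget. For the second stage with $k=4$, your detour through the exceptional case of Theorem \ref{yen} and the swap argument of Lemma \ref{cors} is superfluous under the theorem's hypothesis: you correctly observe that a $K_4$ subgraph of a subcubic graph must be an entire component, but the identical observation applies to $K_{3,3}$ (every vertex of a $K_{3,3}$ subgraph already has all three of its neighbors inside it), so since $G$ has no $K_{3,3}$ component neither does $G-V_1$, condition 1 of Theorem \ref{yen} holds automatically, and the exceptional case you analyze is vacuous --- indeed its own premise already forces $B$ to be a component of $G$, contradicting the hypothesis. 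The paper uses exactly this shortcut (``$G-I$ is also different from $K_{3,3}$'') and applies the Corollary \ref{sub} procedure directly; your extra case analysis is harmless but hides the fact that the hypotheses were chosen precisely to make the $k=4$ case immediate.
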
 
\begin{proof}
First, we have to determine an independent vertex set $I$ of size $s$, $s \leq \lceil n/3 \rceil$. This is an 
easy task, if we apply the following greedy approach. First, we find a vertex $v$ of minimum degree and delete it from the graph together with its
neighborhood $N(v)$. Note that, in all steps except the first, at most 3 vertices are deleted. Then, we repeat this step until $s$ independent vertices are found. If it is not the case and one more independent vertex is needed, we can apply an equalizing 3-coloring procedure given in the proof of Corollary \ref{sub} (see Appendix).
It is easy to see that $s$ must satisfy $s \leq \lceil n/3 \rceil$ and this bound is tight. Next, 
we color graph 
$G - I$ equitably with $k-1$ colors. If $k \geq 5$ then $(k-1)$-coloring of $G - I$ is 
guaranteed by Theorem \ref{haj}. Such a coloring can be obtained in $O(n^2)$ time. 
If $k=4$, $G-I$ can be properly colored with 3 colors, as a subcubic graph 
different from $K_4$. Since $G - I$ is also different from $K_{3,3}$, we can apply the procedure from the proof of Corollary \ref{sub} for equalizing a given 3-coloring of $G - I$. Since this procedure is executed at most twice, the complexity of $O(n^2)$ follows. 
%
\end{proof}

One may ask about a semi-equitable 3-coloring of a subcubic graph $G$. The problem was discussed in \cite{sch}, where we proved 
\begin{theorem}[\cite{sch}]
If an $n$-vertex cubic graph $G$ has an independent set $I$ of size $|I| \geq 2n/5$, then it has a semi-equitable coloring of type 
$[|I|,\lceil \frac{n-|I|}{2} \rceil,\lfloor \frac{n-|I|}{2} \rfloor]$. \hfill $\Box$
\end{theorem}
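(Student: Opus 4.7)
The plan is to find an independent set $V_1\subseteq V$ of size $s:=|I|$ for which $G-V_1$ is bipartite and admits an equitable $2$-coloring; together with $V_1$ this gives the desired coloring of type $[s,\lceil(n-s)/2\rceil,\lfloor(n-s)/2\rfloor]$, since the non-equitable class is $V_1$ and the two remaining classes partition $V\setminus V_1$ into independent sets whose sizes differ by at most~$1$.

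The key input is a simple edge count. Because $G$ is cubic and every vertex of an independent set $J$ sends all three of its edges into $V\setminus J$,
\[
|E(G-J)| \;=\; \frac{3n-6|J|}{2}.
\]
For $|J|=s\ge 2n/5$ this is at most $3n/10$, while $|V(G-J)|=n-s\le 3n/5$. Hence $G-J$ has average degree at most $1$ and at least $|V(G-J)|-|E(G-J)|\ge(4s-n)/2\ge 3n/10$ connected components. So no matter which such $V_1$ we eventually choose, $G-V_1$ is a sparse subcubic graph with many components.

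Starting from $V_1:=I$, I would destroy the odd cycles of $G-V_1$ one at a time by local swaps. If $C$ is an odd cycle of $G-V_1$, every $v\in V(C)$ has $\deg_{G-V_1}(v)\ge 2$ and hence at most one neighbour in $V_1$. If some $v\in V(C)$ has a unique neighbour $u\in V_1$, replace $V_1$ by $V_1':=(V_1\setminus\{u\})\cup\{v\}$: since all neighbours of $u$ lie in $V\setminus V_1$ and $v$'s only $V_1$-neighbour was $u$, the set $V_1'$ is again independent and has the same size, and $C$ is no longer a cycle in $G-V_1'$ because $v$ has been removed from the complement. The delicate sub-case is when every vertex of $C$ has no neighbour in $V_1$, so that $C$ lies entirely inside the degree-$3$ part of $G-V_1$; here one performs a two-step swap via a vertex adjacent to $C$ and uses a monotone potential (for example the lexicographic pair consisting of the number of odd cycles and the length of the shortest odd cycle) to guarantee termination.

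Once $G-V_1$ is bipartite, each component $H_j$ has a unique bipartition $(A_j,B_j)$, and we must choose signs $\varepsilon_j\in\{\pm 1\}$ so that $\bigl|\sum_j\varepsilon_j(|A_j|-|B_j|)\bigr|\le 1$. The sparsity bound above makes this a signed-sum problem with many small summands: components that are isolated vertices or single edges contribute discrepancy $1$ or $0$, and larger components are rare since their edges must fit within the $3n/10$ total. A standard greedy assignment---process components in decreasing order of discrepancy and send the larger side to whichever colour class is currently smaller---then achieves total imbalance at most $1$. The main obstacle of the proof is the odd-cycle-destruction step: everything else is a direct consequence of the counting argument, whereas the swapping must be controlled so that no step creates more odd cycles than it destroys, and it is precisely here that the threshold $2n/5$ should be used to rule out pathological configurations.
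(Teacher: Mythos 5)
The paper does not prove this theorem here---it is quoted from \cite{sch} with the proof omitted---so your argument has to stand on its own, and it has a genuine gap exactly where the real work lies. Your opening edge count is correct and is the natural starting point: for an independent $J$ with $|J|=s\ge 2n/5$, the graph $G-J$ has $(3n-6s)/2\le 3n/10$ edges on $n-s\le 3n/5$ vertices. But the theorem forces $V\setminus V_1$ to split into \emph{two} independent sets, so $G-V_1$ must be made bipartite, and your odd-cycle-destruction step does not establish that this can be done. In the swap $V_1'=(V_1\setminus\{u\})\cup\{v\}$ the vertex $u$ re-enters $G-V_1'$ with degree $2$ there (all three of its neighbours already lay outside $V_1$, and only $v$ has been removed from its neighbourhood), so the same swap that destroys $C$ can create a new odd cycle through $u$; the potential you propose (number of odd cycles, then shortest odd cycle length) is never shown to decrease, and the sub-case in which no vertex of $C$ has a neighbour in $V_1$ is deferred entirely to an unspecified ``two-step swap.'' You say yourself that this is where the hypothesis $s\ge 2n/5$ ``should be used,'' but it is never actually used: as written, the cycle-destruction argument would apply verbatim to any independent set of any size, and the conclusion is false without the threshold (take $K_4$ with $|I|=1$: every $G-I'$ is a triangle). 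Supplying that argument \emph{is} the theorem.

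The final balancing step also has a defect, though a smaller one: a decreasing-order greedy signing of the component discrepancies does not in general achieve imbalance at most $1$ (discrepancies $3,3,2,2,2$ give greedy imbalance $2$ while $0$ is achievable), so ``standard greedy'' is not enough. You would need either a structural bound on the discrepancies---in the spirit of Lemma \ref{lm}, a connected bipartite subcubic component with $|X|-|Y|=d$ contains at least $d-1$ vertices of degree $3$, and the edge count limits how many such vertices $G-V_1$ can have---or a further exchange with $V_1$ to repair a residual imbalance. This part looks repairable by counting; the bipartization step is the one that needs a genuinely new idea before the proof can be accepted.
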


Moreover, we noticed that a cubic graph has an independent set of size $|I| \geq 2n/5$ almost surely. This is so because Frieze and Suen \cite{freize} proved that for random cubic graphs $G$ their independence
number $\alpha(G)$ fulfills the inequality $\alpha(G) \geq­ 4.32n/10-\epsilon n$ for any $\epsilon > 0$ almost surely. Thus a random cubic graph is very likely to have an independent set
of size $s \geq­ 2n/5$ and the probability of this fact increases with $n$.

Tables \ref{tab_cub} and \ref{tab_sub} gather the computational complexity status for semi-equitable $k$-colorings with non-equitable class of size 
$s$ of $n$-vertex cubic and subcubic graphs, respectively. 

\begin{table}[h]
\begin{center}
\begin{tabular}{|c|*{4}{c|}}\hline
$k$ & $s \leq \lceil \frac{n}{3} \rceil$ & $\lceil \frac{n}{3} \rceil < s < \frac{n}{2}$ & $s=\frac{n}{2}$ & $\frac{n}{2} <s$\\ \hline
3 & $O(n)$\footnote{the linear solution 
concerns bipartite cubic graphs only}/-\footnote{the corresponding solution may not exist} & NPC & $O(n)^1$/-$^2$ & -$^2$\\ \hline
$\geq 4$ & $O(n^2)$ & NPC & $O(n)^1$/-$^2$ &-$^2$\\ \hline
\end{tabular}
\caption{The computational complexity of semi-equitable $k$-coloring of cubic graphs.}\label{tab_cub}
\end{center}
\end{table}

\begin{table}[htb]
\begin{center}
\begin{tabular}{|c|*{3}{c|}}\hline
$k$ & $s \leq \lceil\frac{n}{3}\rceil$ & $\lceil\frac{n}{3}\rceil < s \leq \lfloor
\frac{3n}{4}\rfloor$ & $\lfloor\frac{3n}{4}\rfloor <s$\\ \hline
3 & -$^2$ &  NPC & -\footnote{the corresponding solution does not exist in the case of connected graphs}\\ \hline
$\geq 4$ &  $O(n^2)$ & NPC & -$^3$\\ \hline
\end{tabular}
\caption{The computational complexity of semi-equitable $k$-coloring of subcubic graphs.}\label{tab_sub}
\end{center}
\end{table}
\textbf{Acknowledgments.} The authors deeply thank the anonymous referee for her/his careful reading of the manuscript and her/his many insightful comments and suggestions.
Moreover, the authors thank Professor Adrian Kosowski for taking great care in reading our manuscript and making several useful
suggestions improving the presentation.

\section*{Appendix - the proof of Corollary \ref{sub}}
\setcounter{section}{4}
First, we recall some notations used by Chen et al. \cite{clcub}. Given $A$ and $B$ disjoint subsets of $V(G)$,
let $Ak_GB$ denote the set $\{x \in A: x \text{ is adjacent to}$ exactly  $k \text{ vertices of } B \text{ in } G\}$, while $A \deg_G B$ denotes the set of all vertices in $A$ having all its 
neighbors in set $B$, namely $A \deg_G B=$ $\{x \in A: N(x) \subset B\}$. When it is clear we will use $AkB$ and $A \deg B$.
Given a coloring of $G$, the notation $A \Leftrightarrow B$ means that we exchange the color of vertices in $A$ into the color of vertices in $B$ and vice versa. 
A one-way arrow $A \Leftarrow B$ means that we change the color of $B$ into the color of $A$. We write $A \Leftarrow x$ when $B=\{x\}$.
By $G(X,Y)$ we will denote a bipartite graph whose vertices can be divided into two disjoint sets $X$ and $Y$ such that every edge
connects a vertex in $X$ to one in $Y$. 

\begin{lemma}[\cite{clcub}]
Let $G(X,Y)$ be a connected bipartite subcubic graph such that $|X|\geq |Y|$. Then 
$|X|-|Y| \leq |Y3X|+1$. \hfill $\Box$ \label{lm}
\end{lemma}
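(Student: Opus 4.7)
The plan is to combine a lower bound on $|E(G)|$ coming from connectedness with an upper bound coming from the degree budget on the $Y$-side.

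For the upper bound, observe that in the bipartite graph $G(X,Y)$ every neighbor of a $Y$-vertex lies in $X$, so the degree of any $v\in Y$ in $G$ equals the number of its $X$-neighbors. Vertices in $Y3X$ are, by definition, adjacent to exactly $3$ vertices of $X$, while every other vertex of $Y$ is adjacent to at most $2$ vertices of $X$ (since $G$ is subcubic). Summing degrees on the $Y$-side,
\[
|E(G)| \;=\; \sum_{v\in Y} \deg_G(v) \;\leq\; 3\,|Y3X| + 2\bigl(|Y|-|Y3X|\bigr) \;=\; |Y3X|+2|Y|.
\]

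For the lower bound, since $G$ is connected it contains a spanning tree, which has $|X|+|Y|-1$ edges, so $|E(G)|\geq |X|+|Y|-1$. Chaining the two inequalities gives $|X|+|Y|-1\leq |Y3X|+2|Y|$, which rearranges to the desired $|X|-|Y|\leq |Y3X|+1$.

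I do not foresee any real obstacle: the whole argument is a one-line double count. The only thing worth flagging is that connectedness is used exactly to produce the spanning-tree lower bound on $|E(G)|$, and without it the claim would fail (take many isolated $X$-vertices together with a tiny $Y$). The hypothesis $|X|\geq |Y|$ never enters the counting; it only guarantees that the inequality being proved is the informative one, i.e.\ that we are bounding the excess of the larger side over the smaller side.
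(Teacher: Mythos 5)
Your proof is correct, and it is exactly the argument the paper uses for the generalized version (Lemma \ref{lmex}): a spanning-tree lower bound $e\geq |X|+|Y|-1$ from connectedness against the degree-sum upper bound $e\leq 3|Y3X|+2(|Y|-|Y3X|)$ on the $Y$-side. The paper states Lemma \ref{lm} itself without proof (citing \cite{clcub}), but your double count is the same one it writes out for the extension, and your observation that $|X|\geq|Y|$ is not actually needed is also accurate.
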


This lemma can be extended to the following one.

\begin{lemma}
Let $G(X,Y,Z)$ be a tripartite subcubic graph and let $H(X',Y')$, $|X'|\geq |Y'|$, be its connected bipartite subgraph such that $X' \subset X$ and $Y' \subset Y$, and let $t=|Y \deg_H X \cap Y'|$. Then $|X'|-|Y'| \leq t+1$. \label{lmex}
\end{lemma}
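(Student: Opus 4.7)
\medskip
\noindent\textbf{Proof plan.} The strategy is to reduce the tripartite situation directly to the bipartite Lemma \ref{lm}, using the fact that $H(X',Y')$ is itself a connected bipartite subcubic graph, and then to translate the quantity $|Y'3X'|$ that appears there into the new quantity $t=|Y\deg_H X\cap Y'|$ by invoking the subcubic degree bound.

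First I would apply Lemma \ref{lm} verbatim to $H(X',Y')$. Since $H$ is a connected bipartite subcubic graph and $|X'|\ge |Y'|$, the lemma yields
\[
|X'|-|Y'| \;\le\; |Y'3X'|+1,
\]
where $Y'3X'$ denotes the set of vertices in $Y'$ adjacent in $H$ to exactly three vertices of $X'$. This is the only use of the already-proved bipartite result, and the whole reduction rests on it.

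Second I would establish the inclusion $Y'3X'\subseteq Y\deg_H X\cap Y'$, which upgrades the bound on $|Y'3X'|$ to a bound on $t$. Fix any $y\in Y'3X'$: it has three distinct neighbors in $X'$ inside $H$, and since $G$ (hence $H$) is subcubic, $\deg_G(y)=3$ and all three $G$-neighbors of $y$ lie in $X'\subseteq X$. In particular $y$ has no neighbor in $Z$, which is precisely the condition for membership in $Y\deg_H X$; combined with $y\in Y'$ this gives $y\in Y\deg_H X\cap Y'$. Therefore $|Y'3X'|\le t$, and combining with the displayed inequality above gives $|X'|-|Y'|\le t+1$, as required.

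The main conceptual step, and the only place where the tripartite structure enters non-trivially, is the observation in the second paragraph: in Lemma \ref{lm} a vertex of $Y3X$ automatically has all of its $G$-neighbors inside $X$ (since $G$ is bipartite), whereas in the tripartite setting of Lemma \ref{lmex} this property must be enforced explicitly, and it is exactly the constraint captured by $Y\deg_H X$. The subcubic degree bound is what guarantees that three neighbors in $X'$ already saturate the degree of $y$ in $G$, leaving no room for a neighbor in $Z$. No other ingredients are needed beyond Lemma \ref{lm} and $\Delta(G)\le 3$.
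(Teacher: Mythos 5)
Your proof is correct, but it takes a different route from the paper's. The paper proves Lemma \ref{lmex} from scratch by an edge count on $H$: connectivity gives the lower bound $e\geq |X'|+|Y'|-1$ on the number of edges, while the degree constraint gives the upper bound $e\leq 3t+2(|Y'|-t)=2|Y'|+t$, because a vertex of $Y'$ not counted by $t$ has a $G$-neighbor outside $X$ and hence at most two neighbors in $H$; combining the two bounds yields the claim. You instead treat Lemma \ref{lm} as a black box, apply it to $H(X',Y')$ to get $|X'|-|Y'|\leq |Y'3X'|+1$, and then observe the inclusion $Y'3X'\subseteq Y\deg X\cap Y'$: a vertex of $Y'$ with three $H$-neighbors in $X'$ has its $G$-degree saturated, so all its $G$-neighbors lie in $X$. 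That inclusion is sound (note it can be strict, so your intermediate bound $|Y'3X'|+1$ is in fact potentially sharper than $t+1$), and the application of Lemma \ref{lm} is legitimate since $H$ is a connected bipartite subcubic graph with $|X'|\geq|Y'|$. What each approach buys: yours is shorter and makes Lemma \ref{lmex} a genuine corollary of Lemma \ref{lm}, cleanly isolating where the tripartite structure enters; the paper's is self-contained (it does not rely on the proof of Lemma \ref{lm}, which is only cited from the literature) and its explicit edge count makes transparent why the relevant quantity is the number of $Y'$-vertices with all $G$-neighbors in $X$ rather than the number with three $H$-neighbors in $X'$. Both arguments implicitly read the notation $Y\deg_H X$ as referring to $G$-neighborhoods (equivalently, having no neighbor in $Z$), which is the only reading under which the lemma is non-trivial and matches its later use.
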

\begin{proof}
Let $e$ be the number of edges in $H$. Since $H$ is connected, we have $e \geq |X'|+|Y'|-1$. On the other hand, 
the number of edges can 
be bounded from above by $e \leq 3t+2(|Y'|-t)=2|Y'|+t $. We get the thesis by combining these two inequalities.
\end{proof}

\begin{proofcor}{\ref{sub}}
We start with $(A,B,C)$ a proper 3-coloring of $G$ with $|A| \geq |B| \geq |C|$. If the coloring is not equitable we will decrease the color width of the coloring by 1 or 2.
We repeat the color width decreasing procedure until the obtained coloring is equitable.

Let us assume that the 3-coloring is not equitable, this means that $|A|-|C| \geq 2$. We may assume that there is no isolated vertex in $G$, since if we have an equitably 
$k$-colored graph $G$ and we 
add one isolated vertex, then we can always color the isolated vertex in such a way that the whole graph is equitably $k$-colored.
Thus, we have $1 \leq \deg(x) \leq 3$ for 
each $x \in V(G)$. Now, we consider the following steps (cases).
\begin{enumerate}
\item If $A \deg B \neq \emptyset$, then: Choose $x \in A \deg B$ and do $C \Leftarrow x$. 
The color width of the new coloring $(A - \{x\}, B, C \cup \{x\})$ decreases in one or two units. \\Henceforth, in the following we assume $A \deg B = \emptyset$.

\item If $C3A = \emptyset$ then: Consider the bipartite subgraph $G(A,C)$ induced by $A \cup C$. Find a connected component $G(A',C')$ 
of $G(A,C)$ such that $|A'| =|C'|+1$ and do $A' \Leftrightarrow C'$.

Let us prove that there exists such a connected component $G(A',C')$ of $G(A,C)$.  In fact, since $C3A = \emptyset$, there is a component $G(A',C')$ such that 
$|A'| =|C'|+1$, by Lemma \ref{lm}. Then the color width of the new coloring decreases in one or two units. \\Henceforth, in the following we are assuming that $C 3 A \neq \emptyset$.

\item If $|C \deg B| \geq |C 3 A|$ then: Consider the bipartite subgraph $G(A,C)$ induced by $A \cup C$. Find a connected component $G(A',C')$ 
of $G(A,C)$ such that $|A'| \geq |C'|+1$, choose $S \subset (C \deg B) - C'$ such that $|S|=|A'|-|C'| - 1$ and do $A' \Leftrightarrow C' \cup S$.

Let us prove that there exists such a connected component $G(A',C')$ of $G(A,C)$ and such a subset $S$. In fact, since 
$|A| \geq |C| + 2$, there must exist a connected 
component $G(A',C')$ of $G(A,C)$ for which $|A'|\geq |C'|+1$. 
From the assumption of this case we have $|C' 3 A'| \leq |C \deg B|$. Moreover, we have
$|A'|-|C'|=t'$, where $t'\leq |C \deg B|+1$, by Lemma \ref{lmex}, and the subset $S \subset C \deg B - C'$ such that $|S|=t'-1$ may be chosen. 
Then the color width of the new coloring decreases by one unit. 
\\Henceforth, in the following we assume $|C \deg B| \leq |C 3 A| - 1$.

\item If $B \deg A = \emptyset$ then: If $|B| = |C|$ do $B \Leftrightarrow C$. Otherwise consider the bipartite subgraph $G(B,C)$ induced by $B \cup C$. Find a 
connected component $G(B',C')$ of $G(B,C)$ such that $|B'|\geq |C'|+1$, choose $S \subset (C \deg A) - C'$ such that $|S|=|B'|-|C'|$ and do 
$B' \Leftrightarrow C' \cup S$.

Let us prove that there exists such a connected component $G(B',C')$ of $G(B,C)$ and such a subset $S$. 
Again, there must exist a connected component $G(B',C')$ of $G(B,C)$ for which $|B'|\geq |C'|+1$. Since 
$|C' \deg B'| \leq |C\deg B|$, 
we have $|B'|-|C'|=t$, where $t \leq |C\deg B|+1$. Therefore, a subset $S \subseteq C \deg A - C'$ such that $|S|=t$ may be chosen. 

Note that the new coloring does not decrease the color width but verifies $B \deg A \neq \emptyset$. \\Henceforth, in the following we assume $B \deg A \neq \emptyset$.

\item If $A \deg C \neq \emptyset$ then: If $|A|=|B|$ do $C \Leftarrow x$ for any $x \in B \deg A$, otherwise choose $x \in B \deg A$ and $y \in A \deg C$ and do $B 
\Leftarrow y$ and $C \Leftarrow x$.

Of course, all these operations are possible and the color width of the new coloring decreases. \\Henceforth, in the following we assume $A \deg C = \emptyset$.
\end{enumerate}

\noindent From now on, we apply the following steps of the procedure on a one chosen  connected component $G(A',B')$ of $G(A,B)$ such that $|A'|\geq |B'|+1$.

\begin{enumerate}
\setcounter{enumi}{5}
\item If $B' \deg A' = \emptyset$ then: Do $C \Leftarrow x$ and $A' \Leftrightarrow B'$ for any $x \in B \deg A$.

Let us prove the correctness of this step. Since $B' \deg A' = \emptyset$ then we have $|A'| = |B'|+1$, by Lemma \ref{lmex}. 
Thus, the color width of the new coloring decreases in one or two units. An example of such a situation is given in Figure \ref{rys6}. Edges of a subgraph $G(A',B')$ are depicted by dotted line.
\\Henceforth, in the following we may assume that $B' \deg A' \neq \emptyset$.

\begin{figure}[h]
\begin{center}
\includegraphics[scale=0.9]{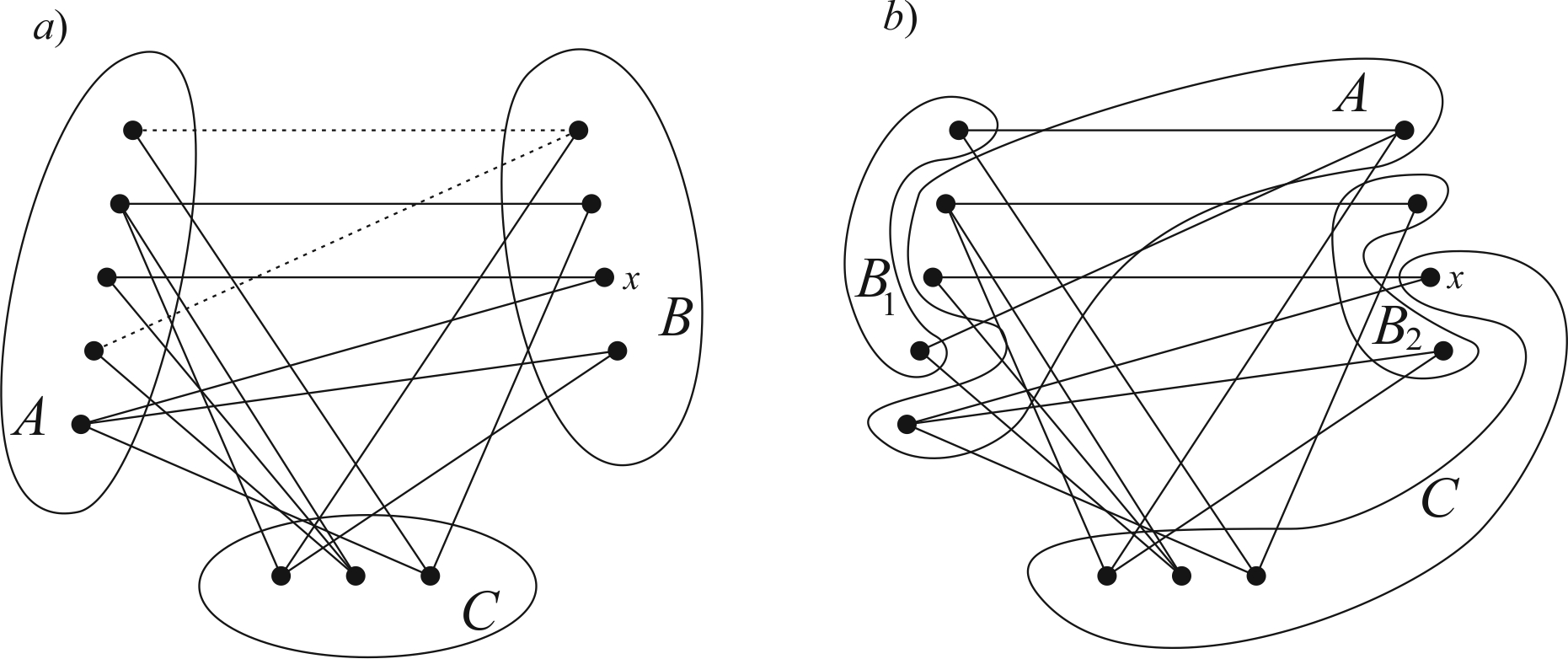} 
\caption{An example for the proof of Corollary \ref{sub} Case 6. A graph with its (a) $[5,4,3]$-coloring and (b) $[4,4,4]$-coloring; $B=B_1 \cup B_2$.}\label{rys6}
\end{center}
\end{figure}

\item If there exists some $x \in B' \deg A'$ such that one of its neighbors, say $y$, satisfies $y \in A'1B'$, then: Do 
$B \Leftarrow y$ and $C \Leftarrow x$.

Of course, such operations are possible and the color width of the new coloring decreases in one or two units. \\Thus, since $A \deg B = \emptyset$ we may assume that 
for every $y \in N(x)$ we have $y \in A'2B'$.

\item If $|B' \deg A'|\geq 2$ and for every $x_1,x_2 \in B' \deg A'$ it holds $N(x_1) \cap N(x_2) \neq \emptyset$, then: Choose any $w \in C \deg A$ and $u \in N(x_1) \cap N(x_2)$.

First, we will show that we may assume $N(x_1)\neq N(x_2)$. 
Indeed, if $\deg(x_1)=\deg(x_2)=1$ then we have $N(x_1)= N(x_2)$, and therefore $|A'|=|N(x_1)|=1$ and $B'=\{x_1,x_2\}$, which contradicts
$|A'| > |B'|$.  Similarly, if $\deg(x_1)=\deg(x_2)=2$ then we also have a contradiction with $|A'| > |B'|$. So let $\deg(x_1)=\deg(x_2)=3$ 
and $N(x_1)= N(x_2)$. 
First, we do an exchange $B \Leftarrow z$ and $C \Leftarrow x_2$ for any $z\in C3A$. Then we have $N(z) \neq N(x_1)=N(x_2)$, since otherwise $N(x_1)=N(x_2)=N(z)$ would force $G=K_{3,3}$, which is excluded by the assumption of the theorem.
Therefore, we have $N(x_1)\neq N(x_2)$, as claimed. 

\begin{itemize}
\item If $u \not \in N(w)$ then: Do $B \Leftarrow u$, $B \Leftarrow w$ and $C \Leftarrow \{x_1,x_2\}$. 

The color width is decreased because the cardinality of color class $A$ was decreased by 1, the number of vertices colored with 2 remains unchanged, while the cardinality of color class $C$ was increased by 1.

\item If $u \in N(w)$ and $ B' \deg A' \backslash \{x_1,x_2\} \neq \emptyset$, then:  Choose $x_3 \in B' \deg A'\backslash \{x_1,x_2\}$ and do $B \Leftarrow w$, $C \Leftarrow u$, and $C \Leftarrow x_3$.

This step is correct because $u$ is not adjacent to $x_3$. Moreover, the number of vertices with 1 is decreased by 1 (vertex $u \in A$ was recolored) while the number of vertices in class C was increased by 1.
\end{itemize}

\item Then, we can assume that for every $\{x_1,x_2\} \subset B' \deg A'$ we have that $N(x_1) \cap N(x_2) \neq \emptyset$ or $B' \deg A' = \{x_1,x_2\}$ and for any $w \in C \deg A$ and any $ u \in N(x_1) \cap N(x_2)$ we have $u \in N(w)$. Note, that it concerns also the case when $|B' \deg A'|=1$, i.e. $x_1=x_2$. Then:

\begin{itemize}
    \item delete $B' \deg A'$ and all those vertices which are adjacent to the two vertices of $B' \deg A'$ from $G(A',B')$. 
    
    Since we have assumed earlier that $x \in B' \deg A'$ and $y \in N(x)$ imply $y \in A'2B'$ for any $x$ and $y$, so each of the remaining vertices will have degree 1 or 2. Hence the resulting graph $G'(A',B')$ decomposes into maximal paths. 
    
    \item There must exists a path with initial and terminal vertices in the same partitions, otherwise $|B'|=|A'|$, which is a contradiction. Without loss of generality, let $u_0,v_1,u_2,v_3, \ldots, u_{2m}$ be a maximal path of $G'(A',B')$ such that $u_0$ is adjacent to $x_1 \in B' \deg A'$ and $u_{2m} \in A'$. 
    
    \begin{itemize}
        \item If $u_{2m} \notin N(x_2)$ for some $x_2 \in B' \deg A'$ and $x_2 \neq x_1$, then: Do $C \Leftarrow x_1$ and $A'' \Leftrightarrow B''$, where $A''=\{u_0,u_2,\ldots,u_{2m}\} \subseteq A'$ and $B''=\{v_1,v_3,\ldots,v_{2m-1} \}\subseteq B'$. 
        
        \item If $u_0 \in N(x_1) - N(x_2)$ and $u_{2m} \in N(x_2)-N(x_1)$ for distinct $x_1,x_2 \in B' \deg A'$, then:
        \begin{itemize}
            \item If $w \in N(v)$ for some $v \in A''$, then: Do $B \Leftarrow w$, $C \Leftarrow \{v,x_2\}$. 
            
            In this case $v$ is not adjacent to at least one of $x_1$ and $x_2$, say $x_2$ and the step is correct.
            
            \item If $w \in N(v)$ for each $v \in A''$, then: Do $B \Leftarrow w$, $C \Leftarrow \{x_1,x_2\}$, $A '' \Leftrightarrow B''$. 
            
            $w$ is independent in $A''$, thus the exchange is correct.
 \end{itemize}
 \end{itemize}
\end{itemize}
\end{enumerate}
An example of such a situation from Case 9 is given in Figure \ref{ex2}. Here $G(A',B')$ consists of $A'=\{u, a_1,\ldots, a_5\}$, $B'=\{x_1,x_2,b_1, \ldots, b_3\}$ and dashed and dotted edges. Graph $G'(A',B')$ consists of vertices $A''=\{a_1,\ldots,a_5\}$, 
$B''=\{b_1, \ldots, b_3\}$, and edges drawn with a dashed line. We choose as the maximal path mentioned above - path $(a_1,b_2,a_4,b_1,a_3)$. We rename these vertices as 
$(u_0,v_1,u_2,v_3,u_4)$. Here $u_0 \in N(x_1)\backslash N(x_2)$ and $u_{2m}=u_4 \in N(x_2) \backslash N(x_1)$.
Since $w \in N(u_0)$, we do: $B \Leftarrow w$, $C \Leftarrow \{u_0,x_2\}$ - the final result is given in Fig.~\ref{ex2}(b).
\begin{figure}[h]
\begin{center}
\includegraphics[scale=0.9]{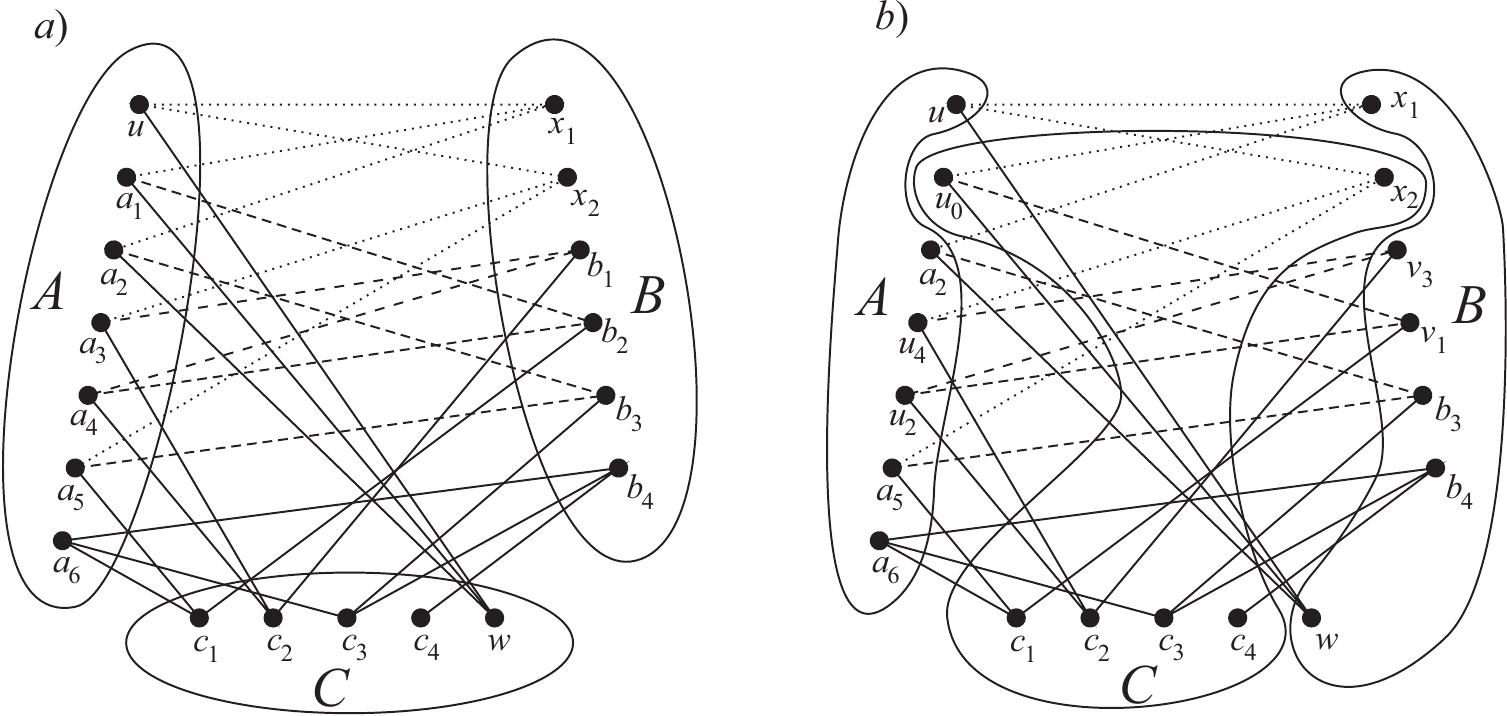} 
\caption{An example of the last step in the proof of Corollary \ref{sub}.}\label{ex2}
\end{center}
\end{figure}

We finally conclude that the color width has been decreased in all the cases of non-equitable 3-coloring of subcubic graph and therefore the proof is complete.
\end{proofcor}

It is easy to see that a single step of decreasing the color width can be done in linear time. Since such a decreasing procedure must
be applied at most $n/3-1$ times, we have $O(n^2)$ as the computational complexity of the whole equalizing procedure.

Of course, if a graph is disconnected, each connected component may be colored and equalized separately. Finally, we may merge the equitable colorings of components into the equitable 3-coloring of the whole graph.


\begin{thebibliography}{88}
\bibitem{clcub} B.L. Chen, K.W. Lih and P.L. Wu, Equitable coloring and the maximum degree, \emph {Europ. J. Combinatorics} 15 (1994), 443--447.
\bibitem{yen} B.L. Chen, C.H. Yen, Equitable $\Delta$-coloring of graphs, \emph{Disc. Math.} 312 (2012), 1512--1517.
\bibitem{hfs:haj} A. Hajnal, E. Szemeredi, Proof of a conjecture of Erd\"{o}s, 
in: \emph{Combinatorial Theory and Its Applications, II} 601--623,
Colloq.\ Math.\ Soc.\ J\'{a}nos Bolyai, Vol. 4, North-Holland,
Amsterdam, (1970).
\bibitem{freize} A. Frieze, S. Suen, On the independence number of random cubic graphs, \emph{Random Graphs and Structures} 5 (1994), 649--664.
\bibitem{sch} H. Furma\'nczyk, M. Kubale, Equitable and semi-equitable coloring of cubic graphs and its application in batch scheduling,  \emph{Archives of Control Sciences} 25 (2015), 109--116.
\bibitem{harder} H. Furma\'nczyk, M. Kubale, Equitable coloring of corona products of cubic graphs is harder than ordinary coloring, \emph{Ars Mathematica Contemporanea} 10 (2016), 333--347.
\bibitem{sch3} H. Furma\'nczyk, M. Kubale, Scheduling of unit-length jobs with cubic incompatibility graphs on three uniform machines, \emph{Disc. Applied Math.} 2017. http://dx.doi.org/10.1016/j.dam.2016.01.036 (in Press).
\bibitem{sch4} H. Furma\'nczyk, M. Kubale, Scheduling of unit-length jobs with bipartite incompatibility graphs on four uniform machines, 
\emph{Bulletin of the Polish Academy of Sciences: Technical Sciences} 65 (2017), 29--34.
\bibitem{garey}  M.R. Garey, D.S. Johnson,  L. Stockmeyer, Some simplified NP-complete problems, \emph{Theor. Comp. Sci.} 1 (1976), 237--267.
\bibitem{fast} H.A. Kierstead, A.V. Kostochka, M. Mydlarz, E. Szemeredi, A fast algorithm for equitable coloring, \emph{Combinatorica} 30 (2010), 217--224.
\bibitem{west} D.B. West, \emph{Introduction to Graph Theory}, Prentice Hall 2001. 
\end{thebibliography}
\end{document}